\documentclass[12pt,a4paper]{amsart}

\usepackage{amsmath}
\usepackage{amssymb}

\theoremstyle{plain} 

\newtheorem{theorem}{Theorem}[section]   
\newtheorem{fact}[theorem]{Fact}

\newtheorem{lemma}[theorem]{Lemma}         
\newtheorem{proposition}[theorem]{Proposition}  

\theoremstyle{definition}

\theoremstyle{remark}

\numberwithin{equation}{section}

\newcommand{\N}{{\mathbb N}}

\newcommand{\osc}{\operatorname{osc}}
\newcommand{\ep}{\varepsilon}

\newcommand{\vf}{\varphi}

\newcommand{\diam}{\operatorname{diam}}

\newcommand{\B}{{\mathcal{B}}}
\newcommand{\F}{{\mathcal{F}}}

\begin{document}


\title[Fr\'echet Derivative]{The Fr\'echet derivative is in the first Baire class}

\author[E. Kopeck\'a]{ Eva Kopeck\'a}
\address{Department of Mathematics\\
   University of Innsbruck\\
 A-6020 Innsbruck, Austria}
\email {eva.kopecka@uibk.ac.at}
\address{and}
\author[L. Zaj\'{\i}\v cek]{Lud\v ek  Zaj\'{\i}\v cek} 
 \address{ Charles University, Faculty of Mathematics and Physics \\
 Sokolovsk\'a 83, 186 75 Praha, Czech Republic}
\email{zajicek@karlin.mff.cuni.cz}


\subjclass[2020]{Primary: 46G05, Secondary: 26B05}
\keywords{Fr\'echet derivative, first Baire class}


\begin{abstract}
Let $X$ be a normed linear space, $Y$ a Banach  space, $G\subset X$ an 
open set and $f:G \to Y$ a  mapping. We show that the Fr\'echet derivative  $f'$ of $f$ is of the first Baire class on the (possibly empty)  set $D\subset G$ where it is defined.
\end{abstract}

\maketitle

\section*{Introduction}

Assume that  $X$ is a normed linear space, $Y$ a Banach  space, $G\subset X$ an 
open set, and $f:G \to Y$ a (not necessarily continuous) mapping. Let  $D$  be  
the set of points $x\in G$ at which the Fr\'echet derivative $f'(x)$ exists. According to \cite{Z} the set  $D$ is always $F_{\sigma\delta}$, and it can, of course,  be empty. In Theorem~\ref{main} we show that the mapping $f':D\to L(X,Y)$ is of the first Baire class. This generalizes a result of \cite{MS}, where 
 it is assumed 
 that $X$ is a Banach space and $f$ is  Fr\'echet differentiable on $G$. Our method of proof differs from that of \cite{MS}.  We construct a sequence of ``step functions" which uniformly converges on $D$ to $f'$. Each of the step functions is of the first Borel class and has  a sigma discrete  base in the sense of 
Hansell, hence it is also in the first Baire class (see Fact~\ref{basef})  and so is $f'$ as its uniform limit. We also present a shorter proof, rather close to the original one, of 
the result of \cite{MS}.    

 Our reason for returning    to this theme is twofold. The paper \cite{MS} appeared only as an ESI-preprint but at the same time its results were used in \cite{HWZ} to give a more transparent proof and a  generalization of
Malý's result on the Darboux property of  Fréchet derivatives. Further, we believe that our generalization
assuming $X$ to be just a  normed linear space and $D$ to be possibly smaller than $G$ is of some interest.
\vspace{2mm}
 
\noindent{\bf Notation and terminology.}
If $\F$ is a family of sets, $\bigcup \F$ denotes the union of all its members.
We denote by $B(c,r)$ the open ball of center $c$ and radius $r$. The symbol $\osc(f,S)$ denotes the oscillation of the mapping $f$ on the set $S$.  The norm on any normed linear space is denoted by $|\cdot|$. 
By $f'$ we {\em always} mean the Fréchet derivative.
  If  $X$ is a  normed linear space  and $Y$  is a Banach space,  we denote by $L(X,Y)$   the Banach space of bounded linear operators from $X$ to $Y$ equipped with the operator  norm.    

A function (= mapping) between metric spaces is of the {\it first Baire class} if it is a pointwise limit of 
 continuous functions. It is of the {\it  first Borel class} if its preimages of open sets are $F_{\sigma}$.
Recall that each function of the first Baire class is also of  Borel class one, but the opposite implication
 holds in special cases only (cf., e.g., \cite{V}).

\section{Basic lemma}

 In this section we formulate and  prove   Lemma~\ref{osc} which is implicitly contained in \cite{MS}.
We  will use the following  elementary observation which appears in  \cite{Z87} in the case where $S=X$. As it significantly shortens the verification of Lemma~\ref{osc} and hence also of Theorem~\ref{MS}, we include its simple proof as well.

\begin{lemma}\label{zah} 
Let $X$, $Y$ be real normed linear spaces  $S\subset X$ and $f:S \to Y$ an arbitrary mapping. Suppose that
 $A:X \to Y$ is a linear mapping, $c\in X$, $\ep>0$ and $\delta>0$ such that
 $|f(c+h)-f(c)- A(h)| \leq \ep |h|$ whenever $|h|< \delta$. Then the inequalities $|x-c|< \delta$,
 $|y-c|< \delta$ and $|x-y|\geq |x-c|$ imply the inequality
$$   |f(y)-f(x)- A(y- x)| \leq 3\ep |y-x|. $$
\end{lemma}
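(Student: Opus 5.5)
The plan is to write $x=c+h$ and $y=c+k$ with $|h|<\delta$, $|k|<\delta$, and to split the expression through the point $c$. Note that $c\in S$ is implicit, since $f(c)$ appears in the hypothesis, and the points $x,y$ are tacitly assumed to lie in $S$; the inequality in the hypothesis is understood for those $h$ with $c+h\in S$. By linearity of $A$, $A(y-x)=A(k)-A(h)$, and so
$$ f(y)-f(x)-A(y-x) = \bigl(f(c+k)-f(c)-A(k)\bigr) - \bigl(f(c+h)-f(c)-A(h)\bigr). $$

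Applying the hypothesis to each bracket and using the triangle inequality will give
$$ |f(y)-f(x)-A(y-x)| \le \ep|k| + \ep|h| = \ep\bigl(|y-c|+|x-c|\bigr). $$

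It then remains to bound $|y-c|+|x-c|$ by $3|y-x|$, and this is exactly where the assumption $|x-y|\ge |x-c|$ enters. First, $|x-c|\le |y-x|$ directly. Second, $|y-c|\le |y-x|+|x-c|\le 2|y-x|$. Hence $|y-c|+|x-c|\le 3|y-x|$, which yields the claimed inequality $|f(y)-f(x)-A(y-x)|\le 3\ep|y-x|$.

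There is no real obstacle here. The only point requiring care is to recognize that the asymmetric hypothesis $|x-y|\ge|x-c|$ suffices to control both distances $|x-c|$ and $|y-c|$ in terms of $|y-x|$, at the cost of the constant $3$.
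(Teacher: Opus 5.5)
Your proof is correct and is essentially identical to the paper's. You split through $c$, apply the hypothesis twice to get $\ep(|x-c|+|y-c|)$, and then use $|y-c|\le|y-x|+|x-c|$ together with $|x-c|\le|y-x|$ to reach $3\ep|y-x|$; your remark that $c,x,y\in S$ is tacitly assumed is a sensible clarification that the paper leaves implicit.
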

\begin{proof}
By the assumptions we have $|f(x)- f(c)- A(x-c)| \leq \ep|x-c|$ and
$|f(y)-f(c)-A(y-c)|\leq \ep |y - c|$. Consequently,
$$
|f(y)-f(x)-A(y-x)|\leq\ep(|x-c| + |y-c|)\leq \ep(|x-c| + |x-c| +
|y-x|)\leq 3\ep|y-x|.
$$
\end{proof}

In this section, in fact in the  entire paper, we work with the following  
\vspace{3mm}

\noindent{\bf Setting.} We suppose that  $X$ is a normed linear space,  $Y$ is a Banach  spaces, $G\subset X$ an 
 open set and $f:G \to Y$ a (not necessarily continuous) mapping. 
We set
\begin{equation}\label{D}
D:= \{x\in G:\ f'(x)\ \ \text{exists}\}.
\end{equation}
Further we set, for  $m,n \in \N$,
\begin{equation}\label{ank}
A_{n,m}:= \{x\in D:\ |f(y)-f(x)-f'(x)(y-x)| \leq \frac{1}{n}|y-x| \text{ whenever } |y-x| < \frac{1}{m}\}.
\end{equation}
\vspace{3mm}

The set $D$ is always an $F_{\sigma\delta}$ set according to   \cite[Theorem 2]{Z}  (see also
\cite[Corollary 3.5.5]{LPT}
 for a different proof).

Obviously
\begin{equation}\label{danm}
D= \bigcup_{m\in \N} A_{n,m}\ \ \ \text{ for each } n\in \N
\end{equation}
and
\begin{equation}\label{anm}
A_{n,m} \subset A_{n,m^*}\ \ \text{ whenever } m \leq m^*.
\end{equation}

\begin{lemma}\label{osc}
Assume Setting.
\begin{enumerate}
\item[(i)]
If  $x_1, x_2 \in A_{n,k}$ and  $|x_1-x_2|< 1/(2k)$, 
then $|f'(x_1)-f'(x_2)|\leq 4/n$.
\item[(ii)]
If $U\subset X$ is open with $\diam U< 1/(2m)$, then  
$\osc(f', D \cap U\cap \overline{A_{n,m}})  \leq 12/n$.
\end{enumerate}
\end{lemma}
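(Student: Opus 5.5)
The plan is to prove (i) by a direct three-term comparison of first-order Taylor remainders, and then to get (ii) from (i) together with Lemma~\ref{zah}.

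\emph{Part (i).} Put $r:=|x_1-x_2|<1/(2k)$ and $T:=f'(x_1)-f'(x_2)$. For any point $y$ with $|y-x_1|<1/k$ and $|y-x_2|<1/k$, consider the three remainders
\[
R_1=f(y)-f(x_1)-f'(x_1)(y-x_1),\quad R_2=f(y)-f(x_2)-f'(x_2)(y-x_2),
\]
\[
R_3=f(x_2)-f(x_1)-f'(x_1)(x_2-x_1).
\]
A direct computation gives $R_1-R_2-R_3=-T(y-x_2)$. Since $x_1,x_2\in A_{n,k}$ and $r<1/k$, the definition (\ref{ank}) yields
\[
|T(y-x_2)|\le \tfrac1n\bigl(|y-x_1|+|y-x_2|+r\bigr).
\]
Now fix a unit vector $u$ and put $y=x_2+su$. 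Because $r<1/(2k)$, we can choose $s$ with $r<s$ and $s+r<1/k$; then both distance conditions hold, since $|y-x_1|\le s+r$. This gives $s|Tu|\le (2s+2r)/n$, so $|Tu|\le (2+2r/s)/n<4/n$. Taking the supremum over $u$ gives $|T|\le 4/n$. The only real issue here is the choice of $y$: nearby choices (for example $y$ near the midpoint) only give a constant $6/n$. One has to use the full radius $1/k$ so that the ratio $r/s$ is below $1$.

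\emph{Part (ii).} First, I would show that if $x\in D$, $a\in A_{n,m}$ and $|x-a|<1/m$, then $|f'(x)-f'(a)|\le 3/n$. Apply Lemma~\ref{zah} with $c=a$, $A=f'(a)$, $\ep=1/n$ and $\delta=1/m$. For any $h$ with $|h|\ge|x-a|$ and $|x+h-a|<1/m$ it gives
\[
|f(x+h)-f(x)-f'(a)h|\le \tfrac3n|h|.
\]
Combined with Fréchet differentiability at $x$, this yields $|(f'(x)-f'(a))h|\le(3/n+o(1))|h|$ for such $h$. Here $|h|$ can be taken arbitrarily small in every direction, but only as long as $|h|\ge|x-a|$. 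So we need $a$ close to $x$, which is exactly what $x\in\overline{A_{n,m}}$ provides. Concretely, take $a_j\in A_{n,m}$ with $a_j\to x$ and $h$ of norm comparable to $|x-a_j|\to0$ in a fixed direction. Then $f'(a_j)$ satisfies the bound up to $o(1)$ errors. By (i) the values $f'(a_j)$ are close to each other, but it is simplest to argue with a single $a$. Namely, for $|h|$ between $|x-a|$ and some small $\eta$, the inequality $|(f'(x)-f'(a))h|\le(3/n+\omega(\eta))|h|$ holds, where $\omega(\eta)\to0$ as $\eta\to0$ comes from differentiability at $x$. Choosing $a$ with $|x-a|<\eta$ then gives $|f'(x)-f'(a)|\le 3/n+\omega(\eta)$.

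Finally, given $x,x'\in D\cap U\cap\overline{A_{n,m}}$, since $U$ is open we choose $a,a'\in A_{n,m}\cap U$ arbitrarily close to $x,x'$. Then $|a-a'|\le\diam U<1/(2m)$, so (i) with $k=m$ gives $|f'(a)-f'(a')|\le4/n$. Together with the step above, this gives
\[
|f'(x)-f'(x')|\le 3/n+4/n+3/n+o(1),
\]
hence the oscillation is at most $10/n\le 12/n$. The main obstacle is the limiting step in (ii), i.e.\ making sure that $|h|\ge|x-a|$ is compatible with $h\to0$. This is handled by choosing $a$ after $\eta$.
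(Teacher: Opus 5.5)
Your proof is correct. Part (i) is the paper's argument: the paper packages your three-remainder estimate as Lemma~\ref{zah} (with $c=x_1$, $x=x_2$) and takes $s=1/(2k)$, which meets your two constraints $r<s$ and $r+s<1/k$.

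In (ii) the chain $x\to a\to a'\to x'$, with (i) at level $m$ in the middle, is also the paper's. The endpoint estimates, however, are handled differently.

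\begin{itemize}
\item[] \textbf{Paper's route.} It avoids any limit. Since $z_1\in D$, by \eqref{danm} there is $k>m$ with $z_1\in A_{n,k}$. By \eqref{anm}, any $x_1\in A_{n,m}\cap U$ with $|x_1-z_1|<1/(2k)$ also lies in $A_{n,k}$. So (i) at level $k$ gives $|f'(x_1)-f'(z_1)|\le 4/n$ directly, for a total of $12/n$.
\item[] \textbf{Your route.} You apply Lemma~\ref{zah} based at $a\in A_{n,m}$, together with the differentiability modulus $\omega$ at $x$. This gives $3/n+\omega(\eta)$, and you then let $\eta\to 0$. It costs a limiting step but yields the slightly sharper bound $10/n$.
\end{itemize}

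One point of exposition needs fixing. The claim you announce first is false in general: it says $|f'(x)-f'(a)|\le 3/n$ for every $a\in A_{n,m}$ with $|x-a|<1/m$. For a counterexample, take $X=Y=\mathbb{R}$, $a=0$, $|x|=1/(2m)$, and $f(y)=\frac{1}{4mn}\sin(N(y-x))\psi(y)$. Here $\psi$ is a bump with $\psi(x)=1$, supported where $|y|\ge 1/(4m)$. Then $a\in A_{n,m}$ with $f'(a)=0$, while $f'(x)=N/(4mn)$ is arbitrarily large.

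What your argument actually proves, and all you need, is weaker. Fix $\eta$ with $2\eta<1/m$ and $B(x,\eta)\subset G$, and then choose $a\in A_{n,m}\cap U$ with $|x-a|<\eta$. Then $|f'(x)-f'(a)|\le 3/n+\omega(\eta)$. You should state the auxiliary claim in that form.
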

\begin{proof}
{\bf (i):}\ 
Choose an arbitrary $v\in X$ with $|v|=1$ and set $y:= x_2 + 1/(2k) \cdot v$. Since  $x_2 \in A_{n,k}$,
 we have
\begin{equation}\label{x2}
|f(y)-f(x_2) - f'(x_2) ( 1/(2k) \cdot v )| \leq \frac{1}{2kn}. 
\end{equation}
  We have $|x_1-x_2|< 1/ (2k)< 1/k$,  $|y-x_2|=  1/ (2k)$, $|x_1-y| \leq |x_1-x_2| + |y-x_2|
< 1/ (2k) + 1/ (2k) = 1/k$ and $|y-x_2| > |x_1-x_2|$. Since $x_1 \in A_{n,k}$,  we  can apply
  Lemma~\ref{zah}  with $c= x_1$, $x= x_2$, $\ep=1/n$, 
$\delta= 1/k$, and $S=G$ and obtain
$$
|f(y)-f(x_2) - f'(x_1) (1/(2k) \cdot  v)| \leq  \frac{3}{2kn}.
$$

Using also \eqref{x2}, we
	 obtain 
$$
|f'(x_1) (1/(2k) \cdot v) - f'(x_2) (1/(2k)\cdot v)| \leq \frac{4}{2kn}.
$$ 
Therefore
	$|(f'(x_1)- f'(x_2)) (v)|\leq  4/n$  and $|f'(x_1)- f'(x_2)|\leq 4/n$ follows. \\
{\bf (ii):} We can assume that $D \cap U \cap \overline{A_{n,m}} \neq \emptyset$. Let 
	 $z_1, z_2 \in D \cap U \cap \overline{A_{n,m}}$ be arbitrary.    By \eqref{danm} and \eqref{anm} we can choose  $k>m$ such that $z_1\in A_{n,k}$. Obviously,
	 we can choose $x_1 \in A_{n,m} \cap U$ with $|x_1-z_1|< 1/(2k)$. Since $x_1 \in  A_{n,k}$, we obtain
	 by (i) that $|f'(x_1)-f'(z_1)| \leq 4/n$. In the same way we  find $x_2 \in A_{n,m} \cap U$
	 with $|f'(x_2)-f'(z_2)| \leq 4/n$. Since  $\diam U < 1/(2m)$, we have $|x_1-x_2|< 1/(2m) $. 
Then  (i) used with $k:=m$ gives $|f'(x_1)- f'(x_2)|\leq 4/n$. Now $|f'(z_1)-f'(z_2)|\leq  12/n$
	 easily follows.
\end{proof}

\section{The case where $X$ is a Banach space and $D=G$}

In this section we use Lemma~\ref{osc} to give a very short proof of the following  result
 of \cite{MS}.
\begin{theorem}\label{MS} \cite[Theorem 1]{MS}
Let $X$ and $Y$ be Banach spaces, $G \subset X$ an open set and $f: G \to Y$ a Fréchet differentiable function.
 Then $f':G \to L(X,Y)$ is a function of the first Baire class.
\end{theorem}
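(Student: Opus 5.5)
We reduce Theorem~\ref{MS} to the classical characterization of Baire one functions on complete metric spaces. Since $X$ is a Banach space and $G$ is open, $G$ is completely metrizable. For a function $g$ from a complete metric space (or a Baire space that is hereditarily Baire) into a metric space, a sufficient condition for being of the first Baire class is that $g$ is of the first Borel class. For maps into a possibly non-separable target like $L(X,Y)$, we would instead use the \emph{point of continuity} (``fragmentability'') criterion: $g$ is of the first Baire class provided that for every $\ep>0$ and every nonempty closed set $F\subset G$ there is a nonempty relatively open $V\subset F$ with $\osc(g,V)\le \ep$. This is the Baire-class-one criterion for complete metric domains, and it holds for non-separable targets by Hansell's/Stegall's results. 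We will take this criterion as known and verify it for $g=f'$ with $D=G$.

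Fix $\ep>0$, choose $n$ with $12/n<\ep$, and let $F\subset G$ be nonempty and closed in $G$. Since $D=G$, \eqref{danm} gives $F=\bigcup_{m}(F\cap \overline{A_{n,m}})$. Each set $F\cap\overline{A_{n,m}}$ is closed in $F$ (closure taken in $G$). $F$ is completely metrizable, being closed in the completely metrizable $G$, so by the Baire category theorem some $F\cap \overline{A_{n,m}}$ has nonempty interior relative to $F$. Thus there is an open $U\subset X$ with $\emptyset\ne U\cap F\subset \overline{A_{n,m}}$. Shrinking $U$ to a small ball around a point of $U\cap F$, we may assume $\diam U<1/(2m)$.

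Now Lemma~\ref{osc}(ii) applies directly. Since $D=G\supset F$, we get
$$\osc(f',U\cap F)\le \osc(f', D\cap U\cap\overline{A_{n,m}})\le 12/n<\ep.$$
So $V:=U\cap F$ is the required relatively open set, and the criterion yields that $f'$ is of the first Baire class.

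The main obstacle is justifying the criterion for non-separable $L(X,Y)$. The easy direction, from ``barely continuous'' to Baire one, is classical only for separable targets. To avoid relying on it, we can instead use the local-oscillation data directly. By transfinite induction, the Baire argument above produces, for each $n$, an ordinal-indexed decreasing family of closed sets $F_\alpha$, with $F_{\alpha+1}=F_\alpha\setminus V_\alpha$ and $\osc(f',V_\alpha)\le 12/n$. Each difference $F_\alpha\setminus F_{\alpha+1}$ is then covered by open sets of small oscillation. Using paracompactness (Stone's theorem: a $\sigma$-discrete refinement), we build a $\sigma$-discrete family and a step function $g_n$ of the first Borel class with a $\sigma$-discrete base and $|g_n-f'|\le 12/n$. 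Fact~\ref{basef} then makes $g_n$ Baire one, and $f'$ is Baire one as the uniform limit of the $g_n$. If the paper's Fact~\ref{basef} is available, we would present this version, with the Baire-category step above as its core.
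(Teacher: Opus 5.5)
Your proof is correct and is essentially the paper's own proof. The paper also applies Stegall's criterion (Proposition~\ref{S}, which holds for Banach-space targets without any separability assumption, so your transfinite fallback is unnecessary) on $G$ with an equivalent complete metric, applies the Baire category theorem to $C=\bigcup_m (C\cap \overline{A_{n,m}})$, shrinks to a ball of radius less than $1/(4m)$, and finishes with Lemma~\ref{osc}(ii); your opening remark that first Borel class implies first Baire class for maps into arbitrary metric spaces is false in general, but you never use it.
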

Our proof is very close to that of \cite{MS}. Instead of 
\cite[Theorem 4, $(ii) \Rightarrow (i)$]{S} we use the following result which is an immediate consequence
 of \cite[Theorem 3]{S}.
\begin{proposition}\label{S}\cite{S}
Suppose that $X$ is a complete metric space, $Y$ is a Banach space
and $\varphi : X \to  Y$ is a function with the following property:

for each $\epsilon > 0$ and each nonempty  closed subset $C$ of $X$   there exists an open subset $U \subset  X$ with $U\cap C\neq \emptyset$  such that the oscilation $\osc(\varphi, U\cap C)<\ep$.

Then $\varphi$ is a Baire one function.
 \end{proposition}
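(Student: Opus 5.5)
The plan is to approximate $\varphi$ uniformly by Baire one ``step functions''. Since $Y$ is a Banach space, a uniform limit of Baire one functions into $Y$ is again Baire one, so it suffices to show: for each $\ep>0$ there is a Baire one $\varphi_\ep:X\to Y$ with $\sup_{x\in X}|\varphi(x)-\varphi_\ep(x)|\leq\ep$.

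\textbf{Step 1: the transfinite exhaustion.} Fix $\ep>0$ and define closed sets $C_\alpha$ by transfinite induction. Put $C_0:=X$ and $C_\lambda:=\bigcap_{\alpha<\lambda}C_\alpha$ for limit $\lambda$. If $C_\alpha\neq\emptyset$, let $W_\alpha$ be the union of all open $U$ with $U\cap C_\alpha\neq\emptyset$ and $\osc(\varphi,U\cap C_\alpha)<\ep$, and set $C_{\alpha+1}:=C_\alpha\setminus W_\alpha$. The hypothesis gives $W_\alpha\cap C_\alpha\neq\emptyset$, so the $C_\alpha$ strictly decrease and the process ends at some ordinal $\gamma$ with $C_\gamma=\emptyset$. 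Then the sets $P_\alpha:=C_\alpha\cap W_\alpha$, $\alpha<\gamma$, partition $X$. Each $P_\alpha$ is the intersection of a closed set and an open set, so it is both $F_\sigma$ and $G_\delta$.

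\textbf{Step 2: refining the pieces.} On $P_\alpha$ the oscillation of $\varphi$ is only locally smaller than $\ep$, so each piece must be split further. By Stone's theorem the open cover of the metric space $W_\alpha$ by the sets $U$ used above has a $\sigma$-discrete open refinement $\mathcal V_\alpha=\bigcup_j\mathcal V_{\alpha,j}$. Making the cover disjoint in the usual way (well-order $\mathcal V_\alpha$ and take differences), I obtain a partition of $P_\alpha$ into sets $E$ with $\osc(\varphi,E)<\ep$ that are still of low Borel complexity. The whole family $\mathcal E$ of these pieces, over all $\alpha$, then partitions $X$. Choose a point $x_E\in E$ for every $E\in\mathcal E$ and define $\varphi_\ep:=\varphi(x_E)$ on $E$. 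Clearly $|\varphi-\varphi_\ep|\leq\ep$ everywhere.

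\textbf{Step 3: $\varphi_\ep$ is Baire one (main obstacle).} I would first show that $\varphi_\ep$ is of the first Borel class and that the family $\mathcal E$ is a $\sigma$-discrete base for it in the sense of Hansell. The natural route is to use the well-ordering of the $C_\alpha$ to rewrite unions of pieces as unions of ``resolvable'' differences $C_\alpha\cap V$ with $V$ open. Then I invoke Hansell's theorem: a Borel class one map on a metric space with a $\sigma$-discrete base is Baire one.

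The delicate point is the nonseparable case, where $\gamma$ may be uncountable. There, a union of uncountably many $F_\sigma$ pieces need not be $F_\sigma$, so I must check genuine $\sigma$-discreteness (with respect to the metric of $X$) of the family $\{E\}$, combined with the well-ordered structure. If this bookkeeping becomes too heavy, my fallback is to derive the statement from \cite[Theorem~3]{S} directly, as the paper indicates. Completeness of $X$ should only be needed to appeal to that result, or to the Baire category theorem if one instead verifies the standard characterization of Baire one maps into Banach spaces via relative points of continuity.
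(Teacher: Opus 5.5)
Step 1 is correct, but your main route has a genuine gap exactly where you put the label ``main obstacle''. Step 3 only describes what would have to be shown, and in the nonseparable case that is the whole content of the proposition. Step 2 does not supply what Step 3 needs. Stone's theorem applied in the open subspace $W_\alpha$ gives families that are discrete in $W_\alpha$ only, so every neighbourhood of a point of $C_{\alpha+1}\cap\overline{W_\alpha}$ may meet infinitely many of their members. Nothing in your construction controls how pieces belonging to different, possibly uncountably many, indices $\alpha$ accumulate. Without $\sigma$-discreteness in $X$ you can neither show that $\vf_\ep^{-1}(H)$ is $F_\sigma$ nor apply Hansell's theorem (Fact~\ref{basef}). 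Rewriting preimages as unions of sets $C_\alpha\cap V$ only moves the problem, because proving that such transfinite unions are $F_\sigma$, and that your pieces form a $\sigma$-discrete base, needs exactly the argument below.

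The missing idea is to slice each layer according to its distance from the next one.
\begin{itemize}
\item Put $P_{\alpha,n}:=\{x\in C_\alpha:\ \operatorname{dist}(x,C_{\alpha+1})\geq 1/n\}$. These sets are closed and $P_\alpha=\bigcup_n P_{\alpha,n}$. For $\alpha<\beta$ we have $P_{\beta,n}\subset C_\beta\subset C_{\alpha+1}$, so $P_{\alpha,n}$ and $P_{\beta,n}$ are at distance at least $1/n$. Hence $\{P_{\alpha,n}\}_{\alpha<\gamma}$ is discrete in $X$ for each fixed $n$.
\item Instead of Stone's theorem inside $W_\alpha$, take one $\sigma$-discrete open cover $\B^k=\bigcup_i\B^k_i$ of $X$ by sets of diameter less than $1/k$ (Fact~\ref{discr}(v)). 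Pair it with the sets $T_{\alpha,k}:=\{x\in C_\alpha:\ \osc(\vf,B(x,1/k)\cap C_\alpha)\leq\ep\}$, which are closed because $x\mapsto\osc(\vf,B(x,r)\cap C_\alpha)$ is lower semicontinuous.
\item For fixed $(n,k,i)$ the family $\{B\cap T_{\alpha,k}\cap P_{\alpha,n}:\ \alpha<\gamma,\ B\in\B^k_i\}$ is discrete in $X$. Its members are intersections of an open and a closed set, each with oscillation at most $\ep$. All these families together cover $X$: for $x\in P_\alpha$ use $\operatorname{dist}(x,C_{\alpha+1})>0$ and $x\in W_\alpha$.
\item Disjointify along an enumeration of the countable set of triples $(n,k,i)$, exactly as in the proof of Lemma~\ref{partition}. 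This gives a disjoint $\sigma$-discrete cover of $X$ by $F_\sigma$ sets. Your step functions are then handled by Fact~\ref{discr}(iv), Fact~\ref{basef} and Fact~\ref{unif}, as in the proof of Theorem~\ref{main}.
\end{itemize}

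Completed this way, your route is a genuinely different, self-contained proof. It is essentially Lemma~\ref{partition} with the countable filtration $A^*_m$ replaced by the transfinite one $C_\alpha$. It also shows that completeness of $X$ is superfluous, since the hypothesis is imposed on every closed set and no category argument enters. The paper gives no argument of its own: it gets the proposition in one line by applying \cite[Theorem 3]{S} to the singleton-valued map $\Phi(x)=\{\vf(x)\}$, which is your fallback.
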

To prove the above proposition, it is sufficient to apply   \cite[Theorem 3]{S} to the ``multivalued'' mapping
 $\Phi(x):= \{\varphi(x)\},\ x \in X$.
 \vspace{3mm}

{\bf Proof of Theorem~\ref{MS}.}
We equip   $G$ with an equivalent complete metric $\sigma$. We  apply Proposition~\ref{S}  to
 $\varphi:= f':(G,\sigma) \to L(X,Y)$. Choose an  arbitrary $\emptyset \neq C \subset G$ closed in
 $(G,\sigma)$ and $\ep>0$. Choose $n \in \N$ with $12/n < \ep$. Since $D=G$, by \eqref{danm} we have  
$$  C= \bigcup_{m\in \N} (C \cap A_{n,m}).$$
Since $(C,\sigma)$ is complete, by the Baire theorem we can choose $m\in \N$ such that
$C \cap \overline{A_{n,m}}$ has nonempty interior in $(C,\sigma)$. Then we can choose
 $w \in C$ and $\delta< 1/(4m)$ such that, denoting $U:= B(w,\delta)$, the ball is in $X$, we have
 $U\subset G$ and $U\cap C \subset  \overline{A_{m,n}}$. By Lemma~\ref{osc} (ii) we have
 $\osc(f', C \cap U)  \leq 12/n < \ep$. We conclude by    Proposition~\ref{S} that $f'$ 
 is a function of the first Baire class on $(G,\sigma)$ and so also on $G$ with the norm metric.

\section{The general case}

In this section we  prove the following generalization of Theorem~\ref{MS}.

\begin{theorem}\label{main}
Suppose that  $X$  is a normed linear space, $Y$ is a Banach  space, $G\subset X$ is an 
open set and $f:G \to Y$ is an arbitrary function. Let  $D$  be  
the set of points $x\in G$ at which the Fr\'echet derivative $f'(x)$ exists.  Then    $f':D\to L(X,Y)$ is of the  first Baire class.
\end{theorem}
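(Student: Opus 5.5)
We will follow the strategy announced in the Introduction: for each $n\in\N$ we construct a ``step function'' $g_n:D\to L(X,Y)$ with $|g_n(x)-f'(x)|\le 12/n$ for all $x\in D$. Each $g_n$ will be of the first Borel class and have a $\sigma$-discrete base, so by the Hansell-type result (Fact~\ref{basef}, to be stated) each $g_n$ is of the first Baire class. Since the uniform limit of Baire one maps into the Banach space $L(X,Y)$ is again Baire one, $f'$ will be of the first Baire class.

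\textbf{Construction of $g_n$.} Fix $n$. Since $X$ is a metric space, by Stone's theorem each open cover of $X$ has a $\sigma$-discrete open refinement. For each $m$ choose a $\sigma$-discrete family $\mathcal{U}_m=\bigcup_{j}\mathcal{U}_{m,j}$ of open sets with diameter $<1/(2m)$ covering $X$, where each $\mathcal{U}_{m,j}$ is discrete. Put $E_m:=D\cap\overline{A_{n,m}}$. By \eqref{danm} and \eqref{anm}, the sets $E_m$ increase and their union is $D$. Each $E_m$ is relatively closed in $D$. Define the pieces
$$P_{m,j,U}:=\Bigl(E_m\setminus \bigcup_{m'<m}E_{m'}\Bigr)\cap \Bigl(U\setminus \bigcup_{j'<j}\bigcup\mathcal{U}_{m,j'}\Bigr)\cap\Bigl(U\setminus\bigcup(\mathcal U_{m,j}\setminus\{U\})\Bigr),$$
where $U\in\mathcal{U}_{m,j}$. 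The last factor only removes other members of a discrete family, so it can be made disjoint by well-ordering or by shrinking to a discrete closed refinement. These pieces partition $D$. On each nonempty piece, pick a point $x_P$ and set $g_n:=f'(x_P)$ there. By Lemma~\ref{osc}(ii), $\osc(f',P)\le 12/n$, so $|g_n-f'|\le 12/n$ on $D$.

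\textbf{Borel class and $\sigma$-discrete base.} For fixed $m,j$, the family $\{P_{m,j,U}:U\in\mathcal U_{m,j}\}$ is discrete in $D$, so the family of all pieces is $\sigma$-discrete. Each piece is a finite Boolean combination of relatively closed sets and open sets in $D$. Open sets in a metric space are $F_\sigma$, and countable unions of open sets remain open, so every piece is both $F_\sigma$ and $G_\delta$ in $D$, i.e.\ $\Delta^0_2$. A $\sigma$-discrete union of such sets is again $F_\sigma$, because a discrete union of closed sets is closed, and the same holds level by level for $F_\sigma$ sets. Hence for any $V\subset L(X,Y)$, the set $g_n^{-1}(V)$ is a union of pieces, and therefore $F_\sigma$. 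So $g_n$ is of the first Borel class, and the family of pieces is a $\sigma$-discrete base for $g_n$.

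\textbf{Main obstacle.} The delicate step is the $F_\sigma$ bookkeeping in the previous paragraph: proving that a union of a discrete family of $F_\sigma$ subsets of $D$ is $F_\sigma$. We handle this by writing each piece as $\bigcup_k F_{P,k}$ with each $F_{P,k}$ relatively closed. Then for fixed $k$, the union over a discrete family of the $F_{P,k}$ is closed. Once this is done, the appeal to Fact~\ref{basef} finishes the proof.
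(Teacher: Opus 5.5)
Your proposal is correct and is essentially the paper's proof (the partition of Lemma~\ref{partition}, step functions with a $\sigma$-discrete base, Fact~\ref{basef}, then uniform limits); your pieces $P_{m,j,U}$ coincide with the members of the paper's $\tilde\Omega^m_j$. The only cosmetic difference is that you subtract the open set $\bigcup_{j'<j}\bigcup\mathcal U_{m,j'}$ rather than its trace $\hat S^m_{j-1}$ on $A^*_m$, which makes the $\Delta^0_2$ property of each piece immediate; also, your third factor is simply $U$, because members of a discrete family are automatically pairwise disjoint, so no well-ordering or refinement is needed there.
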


For the proof we substantially use the notion of a $\sigma$-discrete family  and its properties.

Recall that a 
  family of sets in a topological space is said to be {\em discrete} if each
point of the space has a neighbourhood that meets at most one set of the family.
A family of sets is said to be {\em $\sigma$-discrete} if it is a union of countably
many subfamilies, each of which is discrete.

We  use the following well-known facts.

\begin{fact}\label{discr}
Let $X$ be a metric  space. 
\begin{enumerate}
\item [(i)]
If $\F$ is a  discrete (resp. $\sigma$-discrete) family of subsets of $X$, then each subfamily of $\F$
 is  discrete (resp. $\sigma$-discrete) as well.
\item [(ii)]
If $\F$ is a  discrete  family of subsets of $X$ and $M\subset X$, then the family 
 $\{F \cap M:\ F \in \F\}$ is discrete as well.
\item [(iii)]
Let $\F$ be a discrete family of $F_{\sigma}$ (resp. $ G_{\delta}$) subsets of $X$. Then
 the union of all its sets $\bigcup \F$ is $F_{\sigma}$ (resp. $ G_{\delta}$) as well.
\item [(iv)]
Let $\F$ be a $\sigma$-discrete family of $F_{\sigma}$  subsets of $X$. Then
   $\bigcup \F$ is $F_{\sigma}$ as well.
\item[(v)]
For each $\ep>0$ there exists a $\sigma$-discrete cover $\B$ of $X$ by open sets of diameters less than $\ep$.	
\end{enumerate}
\end{fact}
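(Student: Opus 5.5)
Parts (i) and (ii) follow straight from the definition of a discrete family. Take $x\in X$ and a neighbourhood $V$ of $x$ that meets at most one member of $\F$. Then $V$ meets at most one member of any subfamily of $\F$. It also meets at most one set of the form $F\cap M$ with $F\in\F$, because $V\cap F\cap M\neq\emptyset$ forces $V\cap F\neq\emptyset$. For the $\sigma$-discrete part of (i), write $\F=\bigcup_k\F_k$ with each $\F_k$ discrete. A subfamily $\mathcal G\subset\F$ is then the union of the families $\mathcal G\cap\F_k$, and each of these is discrete by the first part.

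For (iii), the key preliminary observation is that in any topological space the union of a discrete family of closed sets is closed. Indeed, if $x\notin\bigcup\F$, pick a neighbourhood $V$ of $x$ meeting at most one $F_0\in\F$. Then $V\setminus F_0$ is a neighbourhood of $x$ disjoint from $\bigcup\F$. The same argument shows that the family of closures $\{\overline F: F\in\F\}$ is again discrete, since an open $V$ meets $\overline F$ only if it meets $F$.

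In the $F_\sigma$ case, write each $F\in\F$ as $F=\bigcup_j F_j$ with $F_j$ closed. For each fixed $j$ the family $\{F_j:F\in\F\}$ is discrete, being a shrinking of $\F$, so $\bigcup_{F\in\F}F_j$ is closed. Taking the union over $j$ shows that $\bigcup\F$ is $F_\sigma$.

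The $G_\delta$ case is the real obstacle, since we cannot simply intersect unions. Here I use metrizability.
\begin{itemize}
\item Set $W:=\bigcup_{F\in\F}\{x: d(x,F)<r(x)\}$-type neighbourhoods. More concretely, let $O$ be the open set of points having a neighbourhood meeting at most one member of $\F$; in fact $O=X$ by discreteness.
\item For each $F\in\F$, let $U_F$ be the set of points having a neighbourhood that meets $F$ and no other member. This set is open, and the family $\{U_F\}$ is pairwise disjoint.
\item Each $F\subset U_F$, with $F=\bigcap_j G_{F,j}$ and $G_{F,j}$ open. Replacing $G_{F,j}$ by $G_{F,j}\cap U_F$, I may assume $G_{F,j}\subset U_F$.
\item Put $H:=\overline{\bigcup\F}$, and set
\[
  H_j:=\bigcup_{F\in\F}G_{F,j}\ \cup\ \Bigl(X\setminus H\Bigr)
\]
restricted suitably.
\end{itemize}
A cleaner route is the following. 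The set $H=\bigcup_F\overline F$ is closed, so it is $G_\delta$ in a metric space. I claim
\[
  \bigcup\F=H\cap\bigcap_j\bigcup_F G_{F,j}.
\]
To see this, take $x$ in the right-hand side. Then $x\in\overline{F_0}$ for some $F_0\in\F$. Moreover, for each $j$, $x\in G_{F,j}\subset U_F$ for some $F$. Since $x\in\overline{F_0}$ and $U_F$ is an open set meeting only $F$, every such $F$ must equal $F_0$. Hence $x\in\bigcap_j G_{F_0,j}=F_0$. The reverse inclusion is clear.

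Part (iv) follows from (iii) applied to each discrete subfamily, since a countable union of $F_\sigma$ sets is $F_\sigma$.

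For (v), cover $X$ by the open balls $B(x,\ep/3)$. By A.H. Stone's theorem, every metric space is paracompact, and every open cover has a $\sigma$-discrete open refinement. This refinement $\B$ is the desired cover, and its members have diameter at most $2\ep/3<\ep$. Alternatively, one can cite the Bing–Nagata–Smirnov metrization theorem. Either way this is a classical result, and I would simply quote it rather than reprove it.
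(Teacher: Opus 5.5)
Your proposal is correct. For (i), (ii), (iv) and (v) it matches the paper. The paper calls (i) and (ii) straightforward, derives (iv) from (iii), and gets (v) from a $\sigma$-discrete open refinement of a cover by balls.

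The real difference is (iii). The paper only cites Hansell, whereas you prove it.
\begin{itemize}
\item Your $F_\sigma$ argument is the standard one: take the closed pieces level by level and use that the union of a discrete family of closed sets is closed. It works in any topological space.
\item Your $G_\delta$ argument is a neat device. You write $\bigcup\F = H\cap\bigcap_j\bigcup_F G_{F,j}$, where $H=\bigcup_F\overline F$ is closed and each $G_{F,j}\subset U_F$. Intersecting with $H$ ties each point to a single $F_0$. So you never need to expand $\F$ to a disjoint or discrete family of open sets, and metrizability enters only through ``closed sets are $G_\delta$''.
\end{itemize}
The citation keeps the paper short. Your version is self-contained and shows the fact holds in any space whose closed sets are $G_\delta$.

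In (v), your radius $\ep/3$ is actually slightly better than the paper's $\ep/2$. Subsets of a ball of radius $\ep/2$ are only guaranteed diameter $\le\ep$, not $<\ep$.

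Two things to clean up in the write-up. First, delete the abandoned bullet list: the undefined $W$, the set $O$, and the $H_j$ ``restricted suitably''. Second, your claim that the sets $U_F$ are pairwise disjoint is false. In $\mathbb{R}$ with $\F=\{\{0\},\{1\}\}$, the point $1/2$ has a neighbourhood $(-1/4,3/4)$ meeting only $\{0\}$ and a neighbourhood $(1/4,5/4)$ meeting only $\{1\}$. Hence $1/2\in U_{\{0\}}\cap U_{\{1\}}$.

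Your final argument never uses that claim, so nothing breaks. It needs only three properties, all of which hold: $U_F$ is open, $F\subset U_F$, and $U_F\cap F'=\emptyset$ for every $F'\in\F$ with $F'\neq F$.
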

\begin{proof}
The proofs of  (i) and (ii) are  straightforward. For (iii) see e.g.  \cite[p. 148]{Ha} and (iv) is an easy consequence of (iii).
To prove (v), consider the  cover of $X$ by open balls of radius $\ep/2$ and define $\B$ as
 its  $\sigma$-discrete refinement, 
 which exists by e.g. \cite[p. 234]{K}.
\end{proof}

By \cite{Ha},  a family of sets is called a {\em base for a function} from one topological space into
another if the preimage of any open set is a union of sets from the family.
A function is said to be $\sigma$-discrete if it has a $\sigma$-discrete base. 

We  need the following fact.

\begin{fact}\label{basef}
Let $D$ be a metric space, $Z$ a Banach space  and let $\varphi:D\to Z$ be a  function.  If $\varphi$ is of the first Borel class and, moreover, has a $\sigma$-discrete base, then $\vf$ is also of the first Baire class.
\end{fact}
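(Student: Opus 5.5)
The plan is to realise $\vf$ as a uniform limit of simple ``step functions'' that are visibly of the first Baire class. Then we conclude with the standard fact that a uniform limit of Baire one maps into a Banach space is Baire one. (For that fact, pick $g_k$ Baire one with $|g_k-\vf|<2^{-k}$, write $g_{k+1}-g_k$ as a pointwise limit of continuous maps truncated radially to norm $\le 3\cdot 2^{-k}$, and sum.) Alternatively, the whole statement is a theorem of Hansell \cite{Ha}, and one could simply cite it. Below is how I would organise a self-contained argument.

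\emph{Step 1 (a good cover of $D$).} Fix $\ep>0$. By Fact~\ref{discr}(v), applied in $Z$, choose a $\sigma$-discrete open cover $\mathcal V=\bigcup_j\mathcal V_j$ of $Z$ by sets of diameter $<\ep$. Each $\vf^{-1}(V)$, $V\in\mathcal V$, is $F_\sigma$, but the family $\{\vf^{-1}(V)\}$ need not be $\sigma$-discrete in $D$. This is where the $\sigma$-discrete base $\mathcal E=\bigcup_i\mathcal E_i$ of $\vf$ enters. Each $\vf^{-1}(V)$ is a union of members of $\mathcal E$, so the family $\mathcal E^*$ of those $E\in\mathcal E$ lying in some $\vf^{-1}(V)$ covers $D$. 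It is $\sigma$-discrete by Fact~\ref{discr}(i), and each of its members has image of diameter $<\ep$.

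\emph{Step 2 (making the pieces $F_\sigma$).} I want to replace each $E\in\mathcal E^*_i$ by an $F_\sigma$ set $E'$ with $E\subset E'\subset\vf^{-1}(V_E)$, keeping discreteness. To do this, use the neighbourhoods witnessing discreteness of $\mathcal E_i$: by a standard metric argument choose pairwise disjoint open sets $W_E\supset E$ forming a discrete family. Then put $E'=W_E\cap\vf^{-1}(V_E)$, which is $F_\sigma$ since $W_E$ is open and $\vf^{-1}(V_E)$ is $F_\sigma$. I expect this to be the main obstacle: shrinking the witness neighbourhoods so that $\{W_E\}$ is still discrete, not merely disjoint.

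\emph{Step 3 (disjointification).} Enumerate the discrete subfamilies as $\mathcal F_1,\mathcal F_2,\dots$ and set $H_F=F\setminus\bigcup_{l<k}\bigcup\mathcal F_l$ for $F\in\mathcal F_k$. By Fact~\ref{discr}(iii),(iv) each $\bigcup\mathcal F_l$ is $F_\sigma$, and this makes the $H_F$ pairwise disjoint sets of the ambiguous (both $F_\sigma$ and $G_\delta$) kind. Now let $\psi$ be constant equal to some $z_F\in\vf(H_F)$ on each nonempty $H_F$. Then $|\psi-\vf|<\ep$ on $D$. Moreover, for open $O\subset Z$, $\psi^{-1}(O)$ is a union of a $\sigma$-discrete subfamily of the $F_\sigma$ sets $H_F$, hence is $F_\sigma$ by Fact~\ref{discr}(iv). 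So $\psi$ is Borel one, and its range is a $\sigma$-discrete family of values on a partition into ambiguous sets.

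\emph{Step 4 ($\psi$ is Baire one).} Write each ambiguous $H_F$ as $\bigcup_p K_{F,p}$ with closed $K_{F,p}$ increasing in $p$, and such that within each $\mathcal F_k$ the families $\{K_{F,p}\}_F$ are discrete. For level $p$, build continuous $\psi_p$ by gluing the constants $z_F$ on the finitely many levels $K_{F,p}$, $F\in\mathcal F_k$, $k\le p$. These are disjoint closed sets forming a discrete family, so their union is closed. We glue them via a Dugundji/Tietze-type extension into the convex hull of the $z_F$'s. For $x\in H_F$ one has $x\in K_{F,p}$ for all large $p$, so $\psi_p(x)=z_F$ eventually. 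Hence $\psi_p\to\psi$ pointwise. Finally let $\ep=2^{-k}$ and pass to the uniform limit as described at the start.
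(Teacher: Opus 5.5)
The paper does not prove this fact; it quotes it from \cite[Theorem 2]{F} and \cite[Theorem 1, Theorem 2 (b)]{Rog}, so a self-contained argument is a different route. Several parts of your outline are fine: the reduction to uniform limits, Step 1, and the Dugundji gluing in Step 4. Step 2 is not the real difficulty, because metric spaces are collectionwise normal: take disjoint open $V_E\supset\overline E$ via distance functions, then intersect with an open $U$ such that $\bigcup\overline E\subset U\subset\overline U\subset\bigcup V_E$. The genuine gap is in Step 3. Your sets $F=W_E\cap\vf^{-1}(V_E)$ are only $F_\sigma$, and so is $S_k:=\bigcup_{l<k}\bigcup\mathcal F_l$. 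Hence $H_F=F\cap(D\setminus S_k)$ is only the intersection of an $F_\sigma$ set with a $G_\delta$ set, which in general is neither. So $H_F$ is not ambiguous. As a result, neither the Borel one property of $\psi$ nor the representation $H_F=\bigcup_p K_{F,p}$ with closed $K_{F,p}$, which Step 4 needs, is justified.

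The construction can actually fail. Let $D=\mathbb{R}$, enumerate $\mathbb{Q}=\{q_j\}$, and put $\vf(q_j)=\ep 2^{-j-2}$ and $\vf=0$ on $\mathbb{R}\setminus\mathbb{Q}$. Then $\vf$ is upper semicontinuous, hence Baire one, and $\{\{q_j\}:j\in\N\}\cup\{\mathbb{R}\setminus\mathbb{Q}\}$ is a countable, hence $\sigma$-discrete, base. Suppose $\mathcal V$ contains $V_1=(0,\ep/2)$ and $V_2=(-\ep/4,\ep/4)$. Choose $V_{\{q_1\}}=V_1$ and $V_{\mathbb{R}\setminus\mathbb{Q}}=V_2$, take all $W_E=\mathbb{R}$ (legitimate for one-member families), and list $\{q_1\}$ first and $\mathbb{R}\setminus\mathbb{Q}$ second. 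Then your $F$'s are $\mathbb{Q}$ and $\mathbb{R}$, the pieces are $\mathbb{Q}$ and $\mathbb{R}\setminus\mathbb{Q}$, and $\psi$ is a Dirichlet-type function, which is not Baire one.

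The repair is to pass to closed pieces before disjointifying. Write each $E'=\bigcup_p C_{E,p}$ with $C_{E,p}$ closed in $D$. For fixed $i,p$, the family $\{C_{E,p}:E\in\mathcal E^*_i\}$ is a discrete family of closed sets, since it refines $\{W_E\}$. This gives a $\sigma$-discrete cover of $D$ by closed sets whose images have diameter $<\ep$. The union of a discrete family of closed sets is closed, so each disjointified piece is now (closed)$\cap$(open), hence both $F_\sigma$ and $G_\delta$. With that change, Steps 3 and 4 go through as written. This is exactly the point the paper handles in the proof of Lemma~\ref{partition}. There the pieces $B\cap A^*_m$ are (open)$\cap$(closed), so the unions $S^m_i$ are $G_\delta$ by Fact~\ref{discr}(iii), and the differences stay $F_\sigma$.
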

This fact follows immediately from \cite[Theorem 2]{F} but also from \cite[Theorem 1, Theorem 2 (b)]{Rog}. Further relevant information
 can be found in \cite{V} and \cite{Rog}.

Another important ingredient of our proof is the following well-known fact.

\begin{fact}\label{unif}
Let $D$ be a metric space, $Z$ a Banach spaces and $f_n:D \to Z$ functions of the first Baire class.
 If the sequence $\{f_n\}$ converges uniformly to a function $f:D\to Z$, then $f$ is of the first Baire class
 as well.
\end{fact}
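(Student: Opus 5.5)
The plan is the classical telescoping argument. The only care needed is that the continuous approximants of each increment must be uniformly small, so that infinitely many of them can be summed. First pass to a subsequence: since $\{f_n\}$ converges uniformly to $f$, we can choose indices $n_1<n_2<\dots$ with $\sup_{x\in D}|f_{n_{j+1}}(x)-f_{n_j}(x)|<2^{-j}$. Relabel so that $|f_{j+1}-f_j|<2^{-j}$ on $D$. Put $h_j:=f_{j+1}-f_j$. Each $h_j$ is of the first Baire class, as a difference of two such functions (pointwise limits of differences of continuous maps). Moreover $f=f_1+\sum_{j\ge1}h_j$ pointwise on $D$.

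The key step is a truncation lemma: if $g:D\to Z$ is of the first Baire class and $|g|\le c$ on $D$, then $g$ is a pointwise limit of continuous $g_k$ with $|g_k|\le c$. To prove it, take continuous $\tilde g_k\to g$ pointwise and compose with the radial retraction $r_c:Z\to Z$, given by $r_c(z)=z$ if $|z|\le c$ and $r_c(z)=cz/|z|$ otherwise. The map $r_c$ is continuous (indeed $2$-Lipschitz) and is the identity on the closed ball of radius $c$. Hence $g_k:=r_c\circ\tilde g_k$ is continuous, $|g_k|\le c$, and $g_k\to r_c\circ g=g$ pointwise. Apply this lemma to each $h_j$ with $c=2^{-j}$, obtaining continuous $h_{j,k}\to h_j$ with $|h_{j,k}|\le 2^{-j}$. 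Also take continuous $f_{1,k}\to f_1$.

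Then define the continuous functions
\[
F_k:=f_{1,k}+\sum_{j=1}^{k}h_{j,k}.
\]
To check $F_k(x)\to f(x)$, fix $x\in D$ and $\ep>0$, and choose $N$ with $2^{2-N}<\ep$. For $k>N$ we estimate
\[
|F_k(x)-f(x)|\le |f_{1,k}(x)-f_1(x)|+\sum_{j\le N}|h_{j,k}(x)-h_j(x)|+\sum_{j=N+1}^{k}|h_{j,k}(x)|+\sum_{j>N}|h_j(x)|.
\]
The last two sums are each at most $2^{-N}$, and the first finitely many terms tend to $0$ as $k\to\infty$. Hence $\limsup_k|F_k(x)-f(x)|\le 2^{1-N}<\ep$, so $f$ is of the first Baire class.

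The main obstacle is exactly the truncation step. Without the uniform bounds $|h_{j,k}|\le 2^{-j}$, the tail $\sum_{j=N+1}^{k}h_{j,k}(x)$ in the diagonal sum cannot be controlled. The radial retraction handles this; it works in any normed space $Z$, so completeness of $Z$ is needed only implicitly, in that the limit $f$ exists.
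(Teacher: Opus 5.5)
Your proof is correct: the telescoping decomposition $f=f_1+\sum_j h_j$, the radial retraction giving continuous approximants with $|h_{j,k}|\le 2^{-j}$, and the diagonal sums $F_k$ all check out. The paper gives no proof of its own, only pointing to the classical argument sketched in [S], p.~984, and to [Rol, Lemma 1]. Your argument is that standard classical proof, and it works for an arbitrary metric space $D$ without any characterization of Baire class via $F_{\sigma}$ preimages.
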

An essentially classical proof of this fact  is sketched in \cite{S}, p. 984. It follows  also from  a more general \cite[Lemma 1]{Rol}.

To prove Theorem~\ref{main} it is sufficient, by Fact~\ref{unif}, to
 construct a sequence   $\{\vf_n\}$ which uniformly converges on $D$ to $f'$ and each $\vf_n$
  is of the first Baire class.  Our $\vf_n$'s are  some ``step functions"; $\vf_n$ is constant on
	 each set from a  $\sigma$-discrete  family $\Omega_n$ of $F_{\sigma}$ sets, which  is a partition 
	 of $D$. This implies that $\vf_n$ is  of the first Borel class. Observing that $\Omega_n$
	 is a base of $\vf_n$ and using Fact~\ref{basef}, we obtain that   $\vf_n$ is  even of the first Baire class.

The proof of Theorem~\ref{main}   is quite easy using the following lemma. For the proof of the lemma we will need
 Lemma~\ref{osc} and Fact ~\ref{discr}.

\begin{lemma}\label{partition}	
Suppose that  $X$ is a normed linear space,  $Y$ is  a Banach  space, $G\subset X$ is an 
open set and $f:G \to Y$ is an arbitrary  mapping. Let  $D$ be the set  of the points $x\in G$ at which the Fr\'echet derivative of $f$ exists.
	Then for each $\varepsilon>0$ there exists a family $\Omega$ of subsets of $D$ with the following
	 properties:
\begin{enumerate}
\item[(a)]	The sets in $\Omega$  are pairwise disjoint.
\item[(b)] The union of all of the sets in $\Omega$  is $D$.
\item[(c)]	The family $\Omega$   is $\sigma$-discrete.
\item[(d)]	Each member of	$\Omega$  is an $F_{\sigma}$ set  in $D$. 
\item[(e)]	For each $ E \in \Omega$  it holds that $\osc(f',E) \leq \varepsilon$.
\end{enumerate}
\end{lemma}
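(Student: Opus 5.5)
Fix $\varepsilon>0$ and choose $n\in\N$ with $12/n\le\varepsilon$. For each $m\in\N$ use Fact~\ref{discr}(v) to get a $\sigma$-discrete cover $\B_m$ of $X$ by open sets of diameter less than $1/(2m)$. Write $\B_m=\bigcup_{j}\B_{m,j}$ with each $\B_{m,j}$ discrete. The candidate pieces are the sets $D\cap U\cap\overline{A_{n,m}}$ with $U\in\B_m$. By Lemma~\ref{osc}(ii) each of them has $\osc(f',\cdot)\le 12/n\le\varepsilon$, and by \eqref{danm} their union over all $m$ and all $U\in\B_m$ is $D$. What remains is to make them pairwise disjoint while keeping them $F_\sigma$ in $D$ and the family $\sigma$-discrete. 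Shrinking a piece can only decrease the oscillation, so (e) is preserved under any such modification.

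To disjointify, I will work one step at a time. Put $C_m:=D\cap\overline{A_{n,m}}$, which is closed in $D$ and increasing in $m$ by \eqref{anm}. Set $P_m:=C_m\setminus C_{m-1}$ (with $C_0=\emptyset$). Each $P_m$ is the difference of two relatively closed sets, hence $F_\sigma$ in $D$, and the $P_m$ partition $D$. Inside each $P_m$ I still have to disjointify the cover $\{P_m\cap U: U\in\B_m\}$. Since $\B_m$ is only $\sigma$-discrete, I will well-order $\B_m$ so that the order respects the index $j$: first all members of $\B_{m,1}$, then those of $\B_{m,2}$, and so on, each $\B_{m,j}$ well-ordered arbitrarily. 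Alternatively, take a $\sigma$-discrete refinement via Stone's theorem, whose standard construction already gives disjoint $F_\sigma$ pieces. For $U\in\B_{m,j}$ define
\[
E_U:=P_m\cap U\setminus\Bigl(\bigcup_{i<j}\bigcup\B_{m,i}\Bigr).
\]
Distinct members of the same $\B_{m,j}$ are disjoint automatically? Not necessarily, since a discrete family of open sets may still overlap. I therefore plan to replace each $\B_{m,j}$ by a family whose members are pairwise disjoint. The standard Stone construction (as in \cite{K}) in fact yields $\B_{m,j}$ consisting of sets that are pairwise at positive distance within each $j$, so this is available. With that choice, $\bigcup\B_{m,i}$ is open, so $E_U$ is (relatively $F_\sigma$) $\cap$ (open) $\cap$ (closed), which is $F_\sigma$ in $D$ because open sets in a metric space are $F_\sigma$. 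Pairwise disjointness then holds across different $j$ by construction, within a fixed $j$ by the disjointness of the members, and across different $m$ because the $P_m$ are disjoint.

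Let $\Omega$ be the family of all nonempty $E_U$, over $m\in\N$ and $U\in\B_m$. Property (b) holds because every $x\in P_m$ lies in some member of $\B_m$, and for the least such $j$ we get $x\in E_U$. Property (c) follows from Fact~\ref{discr}(i),(ii): the family $\{E_U:U\in\B_{m,j}\}$ is a shrinking of a discrete family, hence discrete, and $\Omega$ is a countable union of such families. Property (d) was shown above, and (e) follows from Lemma~\ref{osc}(ii) since $E_U\subset D\cap U\cap\overline{A_{n,m}}$ with $\diam U<1/(2m)$. I expect the main obstacle to be securing pairwise disjointness within a single discrete subfamily, that is, checking that Fact~\ref{discr}(v) can be taken with each discrete layer consisting of pairwise disjoint open sets. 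I will justify this by quoting the form of Stone's theorem in \cite{K}, where the layers satisfy $\operatorname{dist}\ge$ a positive constant. If that fails, a fallback is to intersect with the $F_\sigma$ sets $U\setminus\bigcup\{V\in\B_{m,j}: V\prec U\}$, where $\prec$ is the well-order; these are $F_\sigma$ because discreteness makes the union of the predecessors relatively closed in $U$ locally.
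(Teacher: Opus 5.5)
Your proposal is correct and is essentially the paper's proof: your sets $E_U$ (for $U\in\B_{m,j}$) are exactly the paper's members $Z\setminus(\hat S^m_{j-1}\cup A^*_{m-1})$ of $\tilde\Omega^m_j$ with $Z=U\cap A^*_m$, and your direct $F_\sigma$ argument (removing the set $\bigcup_{i<j}\bigcup\B_{m,i}$, which is open for any cover by open sets) is slightly simpler than the paper's route through Fact~\ref{discr}(iii). The obstacle you anticipate does not exist: distinct members of a discrete family are automatically disjoint, since a point of $U\cap V$ would have every neighbourhood meeting both $U$ and $V$, and this is exactly what the paper uses. So your remark that a discrete family of open sets may overlap is false, and you need neither the internals of Stone's construction nor the fallback.
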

\begin{proof}
	 By  Fact~\ref{discr} (v) we  can choose for each $m\in \N$ a $\sigma$-discrete cover
 $\B^m$ of $X$ by open sets of diameters
less than $1/(2m)$ 
and write $\B^m = \bigcup_{i=1}^{\infty} \B^m_i$
where each $\B^m_i$ is a discrete family.	
Choose $n\in \N$ such that $1/(12n) < \varepsilon $ and define sets $A_{n,m}$ by \eqref{ank}.
Then set  $A^*_m:= \overline{A_{n,m}} \cap D$ for $m\in \N$ and $A_0^*:= \emptyset$, and define
$$  \Omega_i^{m}:= \{ B \cap  A_m^*:\ B \in \B^m_i\},\ \ m,i \in \N. $$ 
 Note that the family $\Omega^*:= \bigcup_{m,i \in \N} \Omega_i^{m}$ has properties (b)-(e) but need not
 be disjoint, so we will modify $  \Omega_i^{m}$.
Namely, we first set,  for $i,m \in \N$,
\begin{equation}\notag
  S_i^m:= \bigcup \Omega_i^{m},\ \ \ \ \  \hat S_i^m:= \bigcup_{1\leq j\leq i} S_j^m, \ \ \  \ \ 
   \hat S_0^m:= \emptyset, 
\end{equation}
and then
\begin{equation}\notag
\begin{split} 
\tilde \Omega_i^m&:= \{ Z \setminus (\hat S_{i-1}^m \cup A^*_{m-1}):\ Z \in \Omega_i^m\},\ \ i,m\in \N, \\ 
 \Omega&:= \bigcup_{m,i \in \N} \tilde \Omega_i^m.
\end{split}
\end{equation}
First we  show that the  family $\Omega$ has properties (a) and  (b) meaning  that it is a partition of $D$.
Using \eqref{danm} and \eqref{anm}  we easily see that the family
\begin{equation}\label{decahv}
\{ A_m^* \setminus A^*_{m-1}:\ m \in \N\}\ \ \ \ \text{is a partition of } D. 
\end{equation}

Since $\B^m$ is a cover of $X$, we obtain  $\bigcup_{i\in \N} S_i^m =  \bigcup_{i\in \N} \hat S_i^m =  A^*_m,
\ m\in \N$,
 and so, for each $m\in \N$, the family
 \begin{equation}\label{dechs}
\{ \hat S_i^m \setminus  \hat S_{i-1}^m:\ i \in \N\}\ \ \ \ \text{is a partition of } A^*_m. 
\end{equation}
Since $\B_i^m$ is disjoint (even discrete), $\Omega_i^m$ is disjoint as well; so it is a partition
 of   $S_i^m$. Consequently $\tilde \Omega_i^m$ is a partition of the set
 $(S_i^m \setminus \hat S_{i-1}^m) \setminus A^*_{m-1} =
 (\hat S_i^m \setminus \hat S_{i-1}^m) \setminus A^*_{m-1}$.
Therefore, using \eqref{dechs}, we get that the family
$\bigcup_{i\in \N} \tilde \Omega_i^m$ is a partition of $ A^*_m \setminus A^*_{m-1}$.
Hence  $\Omega= \bigcup_{m\in \N} \bigcup_{i\in \N} \tilde \Omega_i^m$ is a partition
 of $D$ by \eqref{decahv}.

Since each family $\B_i^m$ is discrete,  by Fact~\ref{discr} (ii) all  $\Omega_i^m$ and $\tilde \Omega_i^m$ 
are 
 discrete as well, and  (c) follows.

Further,  each $\Omega_i^m$ is discrete and each one of  its members is clearly both $F_{\sigma}$
 and $G_{\delta}$ in $D$. Therefore by   Fact~\ref{discr} (iii) all $S_i^m$, and so also  $\hat S_i^m$,
 are $G_{\delta}$ in $D$. Thus we easily infer that each member of each $\tilde \Omega_i^m$ is
$F_{\sigma}$ in $D$ and (d) follows.

 To prove (e), consider an arbitrary $E \in \Omega$. Then $E\in \tilde \Omega_i^m$ for some $m,i \in \N$.
 By the  definitions of $\tilde \Omega_i^m$ and $\Omega_i^m$ we obtain that 
$E \subset A^*_m = \overline{A_{n,m}} \cap D$. At the same time $E\subset U$ for some $U \in \B^m$ with $\diam U < 1/(2m)$.
Hence $\osc(f',E)  \leq  1/(12n) < \ep$ by
Lemma~\ref{osc}. 
\end{proof}

{\bf Proof of Theorem~\ref{main}}

Assume $D$ is not empty. For $n\in \N$ let $\Omega_n$ be as $\Omega$ in Lemma~\ref{partition} for $\varepsilon=1/n$. We define a function $\vf_n:D\to L(X,Y)$ as follows. For each nonempty set $E\in \Omega_n$ fix a point $x_E\in E$ and define $\vf_n$ on $E$ as the constant function equal to $f'(x_E)$. According to (a) and (b) of   Lemma~\ref{partition} the function $\vf_n$ is well defined. 
 Further, Lemma~\ref{partition} (e) implies that
\begin{equation}\label{blizko} 
 | \vf_n(x)-f'(x)|
 \leq 1/n\ \ \ \text{whenever}\   x \in D.
\end{equation}
For each open set $H \subset L(X,Y)$ we have
\begin{equation}\label{pre}
(\vf_n)^{-1} (H) = \bigcup \{E \in \Omega_n:\ f'(x_E) \in H\}. 
\end{equation}
  By   Lemma~\ref{partition} (c) and Fact~\ref{discr} (i) the family $\{E \in \Omega_n:\ f'(x_E) \in H\}$ is $\sigma$-discrete.
Using this fact, \eqref{pre}, Lemma~\ref{partition}  (d) and Fact~\ref{discr}  (iv), we obtain that
 $(\vf_n)^{-1} (H)$ is $F_{\sigma}$; hence
 the function $\vf_n$ is of the first Borel class.   By \eqref{blizko} the sequence $\{\vf_n\}$   
converges uniformly    on $D$ to $f'$ and hence $f'$ is also of the  first Borel class (\cite{K}, p. 386). We can do even better. By  \eqref{pre}  the $\sigma$-discrete family $\Omega_n$ is a base of
 $\vf_n$  and so by Fact~\ref{basef} each  $\vf_n$ is also of the first Baire class. Consequently 
   $f':D\to L(X,Y)$ is  of  the first Baire class  by Fact~\ref{unif}.

\end{document}